\documentclass[10pt,a4paper]{amsart}
\usepackage[utf8]{inputenc}
\usepackage{amsmath}
\usepackage{amsfonts}
\usepackage{amssymb}
\usepackage{amsthm}
\usepackage{comment}
\usepackage{color} % per scrivere a colori
%
%		\textcolor{red}{la frase che si vuole scrivere}
%
%\usepackage{showkeys}
%\`{a}\'{a}\^{a}\`{e}\'{e}\^{e}\`{\i}\'{\i}\^{\i}\'{o}\`{o}\^{o}\`{u}\'{u}\^{u}\`{A}\'{A}\^{A}\`{E}\'{E}\^{E}\`{I}\'{I}\^{I}\`{O}\'{O}\^{O}\`{U}\'{U}
\parindent 0cm

\newtheorem{theorem}{Theorem}[section]

\newtheorem{lemma}{Lemma}[section]

\newtheorem{defi}{Definition}[section]
\newtheorem{remark}{Remark}[section]
\newtheorem{ass}{Assumption}[section]

\newcommand{\dive}{\text{\rm div}}

\newcommand{\dist}{\text{\rm dist}}

\newcommand{\R}{{\mathbb R}}
\newcommand{\C}{{\mathbb C}}
\newcommand{\N}{{\mathbb N}}

\begin{document}
	
	\title[Stable determination of a Lam\'e coefficient]{Stable determination of a Lam\'e coefficient by one internal measurement of displacement}
	
	\author[G.Di Fazio]{Giuseppe~Di Fazio}
	\address{Dipartimento di Matematica e Informatica\\ 
		Università di Catania\\
		Via\-le A. Doria 6, 95125, Catania, Italy}
	\email{difazio@dmi.unict.it}
	
	\author[E.Francini]{Elisa Francini}
	\address{Dipartimento di Matematica \\ 
		Università di Firenze\\
		Via\-le Morgagni 67 A, Firenze, Italy}
	\email{francini@math.unifi.it}
	
	\author[F.Raciti]{Fabio Raciti}
	\address{Dipartimento di Matematica e Informatica\\ 
		Università di Catania\\
		Via\-le A. Doria 6, 95125, Catania, Italy}
	\email{fraciti@dmi.unict.it}
		
	\author[S.Vessella]{Sergio Vessella}
	\address{Dipartimento di Matematica \\ 
		Università di Firenze\\
		Via\-le Morgagni 67 A, Firenze, Italy}
	\email{sergio.vessella@unifi.it}
	\keywords{Elastography, stability, interior data.}
	\subjclass[2010]{35R30, 35J57}
	\begin{abstract}
In this paper we show that the shear modulus $\mu$ of an isotropic elastic body can be stably recovered by the knowledge of one single displacement field whose boundary data can be assigned independently of the unknown elasticity tensor.
\end{abstract}
	\maketitle
	
\section{Introduction}
In this paper we consider the following problem:
let $\Omega\subset\R^n$, $n\geq 2$ be a bounded domain representing an elastic isotropic body with Lam\'e coefficients $\lambda$ and $\mu$. Assuming $\lambda$ is known, we want to stably recover the shear modulus $\mu$ from the knowledge of one single displacement field in $\Omega$, that is  a solution $u\in (H^1(\Omega))^n$ to the elasticity system
\[\dive \left(\C\hat{\nabla}u\right)=0\mbox{ in }\Omega\]
where  
\[
\C \hat{\nabla} u = \lambda \dive(u) I_n + \mu \hat{\nabla}u,
\]
with $I_n$ being the $n\times n$ identity matrix and $\hat{\nabla}u=\frac{1}{2}(\nabla u+(\nabla u)^T)$.

This problem is connected to the imaging method usually called Elastography. The most common approach to Elastography consists in a 2-step reconstruction. In the first step the elastic displacement is recovered from either sound waves (Ultrasound Elastography) or protons' propagation (Magnetic Resonance Elastography). In the present analysis we study the second step, namely the quantitative reconstruction of Lam\'e parameters from the knowledge of elastic displacements.

This problem has recently been studied in \cite{BBIM} and \cite{Lai} for isotropic elasticity tensors in the time harmonic regime, and in \cite{BMU} for the anisotropic case at zero frequency. In these papers a key point in the proof of unique and stable reconstruction consists in looking for few displacement fields satisfying a rank maximality condition concerning their gradients. Existence of such non degenerate sets of solutions is usually proven by using density arguments (such as Runge approximation) or CGO solutions. Unfortunately, it is not possible to choose a-priori boundary data of these particular solutions, since they depend on the interior values of the unknown elasticity tensor. For an analysis of this issue in the scalar case we refer to \cite{Cap} and \cite{ABDC}.

For this reason, following the method used in \cite{A2014} and \cite{ADcFV2016}, we propose here the choice of boundary values in such a way that the degeneracy of $\nabla u$ can be controlled by quantitative estimates of unique continuation. We point out that this stability estimate is obtained with only one internal measurement.
On the other hand, here we focus our attention only on the shear modulus $\mu$ and assume that $\lambda$ is known. 
This is not a big restriction for the possible application of our result, because the shear modulus $\mu$ is the parameter  that changes more between healthy and damaged tissues (see \cite{50}).
  
The paper is organized as follows: in section \ref{assumptions} we list the main notations and assumptions; in section \ref{formulation} we formulate the problem and state our result. The main tools of our analysis are an integral stability estimate (section \ref{lemma}) and quantitative estimates of unique continuation (section \ref{QEUC}). Finally, section \ref{proof} contains the proof of our result. 
\section{Preliminary assumptions}\label{assumptions}
We denote points in $\R^{n}$ by $x=(x',x_n)$ where $x'\in \R^{n-1}$ and $x_n\in\R$. Analogously, we denote by $\Omega'$ the set of points in $\R^{n-1}$ such that $(x',x_n)$ belong to $\Omega$ for some $x_n$.
\begin{ass} \label{ipotesi-dominio}
We assume that $\Omega$, a bounded domain in $\R^n$ with $n\geq 2$, is a domain with $C^{1,1}$ boundary, that is, for any $x_0\in \partial \Omega$ there exists a rigid change of coordinates such that, $x_0=0$ and 
$$
\Omega \cap B_{r_0}(0) = 
\left\lbrace
x \in B_{r_0}(0) \ : \ x_n>\psi(x')
\right\rbrace. 
$$
where $\psi$ is a $C^{1,1}$ function defined in $B'_{r_0}(0)$ such that $\psi(0)=0$ and
$$
\|\psi\|_{C^{1,1}(B'_{r_0}(0))}
\leq 
M_0.
$$
%where we set
%$$
%\|\psi\|_{C^{0,1}(B'_{r_0}(0))}
%=
%\|\psi\|_{L^{\infty}(B'_{r_0}(0))}
%+
%r_0 \sup_{x',y'\in B'_{r_0} \\ x'\neq y'} \dfrac{|\psi(x')-\psi(y')|}{|x'-y'|}.
%$$
\end{ass}

As usual, for any $d>0$, we set
$$
\Omega_d
=
\left\lbrace 
x \in \Omega \ : \ \dist(x,\partial \Omega)>d
\right\rbrace.
$$

In the sequel we deal with the Lam\'e coefficients $\lambda$ and $\mu$, on which we posit the following assumptions.

\begin{ass} \label{ipotesi-coeff}
$$
\begin{cases}
\mu,\lambda \in C^{0,1}(\bar{\Omega}) & \\
\|\mu\|_{C^{0,1}(\bar{\Omega})} + \|\lambda\|_{C^{0,1}(\bar{\Omega})}
\leq M
\end{cases}
\leqno(2a)
$$

$$
\mu(x) \geq \alpha_0 >0\,, \quad 2\mu(x)+n\lambda(x) \geq \beta_0>0 \
\text{\rm in} \ \bar{\Omega}. 
\leqno(2b-strong \ convexity)
$$
\end{ass}

%where $M_0,r_0,M,\alpha_0,\beta_0$ are supposed to be known positive constants.

We also assume 
\begin{ass}\label{ipotesi-g}
	The function $g$  belongs to $H^{3/2}(\partial \Omega)$, and
	\begin{equation}\label{normag}
	\|g\|_{H^{3/2}(\partial \Omega)}\leq L_0.
\end{equation}
Moreover we assume that $g$ is far from rigid movements, that is, given
\[\Theta(g):=\min \{\|g-(a+Wx)\|_{H^{1/2}(\partial \Omega)}\,:\, a\in\R^n,\, W\in \R^{n\times n}, W+W^T=0\}\]
we assume that 
\begin{equation}\label{norigid}
	\Theta(g)\geq \delta_0>0.
\end{equation}
\end{ass}

In the sequel we will use the following frequency of function $g$:
\begin{defi}
For any $g\in H^{1}(\partial \Omega)$, we set
$$
F[g]
=\dfrac{\|g\|_{H^{1}(\partial \Omega)}}{\Theta(g)}
$$
\end{defi}
For any function $g$ satisfying assumption \ref{ipotesi-g}, we have $F[g]\leq L_0\delta_0^{-1}$. 

\section{Formulation of the problem and main result}\label{formulation}
Let us consider functions $\mu_1$, $\mu_2$ and $\lambda$ such that $(\mu_1,\lambda)$ and $(\mu_2,\lambda)$ satisfy assumptions (2a) and (2b).

Let $u$ be the solution of the problem 
\begin{equation} \label{def-u}
\begin{cases}
\dive(\C_1 \hat{\nabla}u)=0 & \mbox{in } \Omega \\
u  = g & \mbox{on }\partial \Omega
\end{cases}
\end{equation}
where 
\[
\C_1 \hat{\nabla} u = \lambda \dive(u) I_n + \mu_1 \hat{\nabla}u,
\]
and let $v$ be the solution of the problem
 \begin{equation} \label{def-v}
 \begin{cases}
 \dive(\C_2 \hat{\nabla}v)=0 & \mbox{in } \Omega  \\
v = k & \mbox{on }\partial \Omega
 \end{cases}
 \end{equation}
 where 
 \[
 \C_2 \hat{\nabla} v = \lambda\dive(v) I_n + \mu_2 \hat{\nabla}v.
 \]

 Now we are ready to state our main result.
 \begin{theorem}\label{mainth}
 Let $d>0$ be such that $\Omega_d\neq \emptyset$. Then
 \begin{equation} \label{main-result}
\|\mu_1-\mu_2\|_{L^{\infty}(\Omega_d)}
\leq 
C
\left( 
\|\mu_1-\mu_2\|_{L^{\infty}(\partial\Omega)}
+
\|u-v\|^{1/4}_{L^{2}(\Omega)}
\right)^\delta
 \end{equation}
 where the constants $C>0$ and $\delta \in (0,1)$ depend only on $M_0$, $|\Omega|$, $r_0$, $d$, $M$, $\alpha_0$, $\beta_0$, $L_0$ and $\delta_0$.
 \end{theorem}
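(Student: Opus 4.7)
My approach would follow the strategy of \cite{A2014,ADcFV2016}: compare the two systems by writing an equation for $w=u-v$, derive a weighted integral bound on $(\mu_1-\mu_2)^2$ with weight $|\hat\nabla u|^2$, and combine it with a quantitative lower bound for $|\hat\nabla u|$ on interior balls provided by unique continuation. The hypothesis $\Theta(g)\geq\delta_0$, which prevents $u$ from being close to a rigid motion, is what makes such a lower bound possible.

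First, setting $w=u-v$ and subtracting the two elasticity systems, one obtains
\[
\dive(\C_2\hat\nabla w)=\dive\bigl((\mu_2-\mu_1)\hat\nabla u\bigr) \text{ in } \Omega,
\]
with $w=g-k$ on $\partial\Omega$. Testing against a suitable combination of $w$ and an extension of $g-k$, exploiting strong convexity (2b) and the $H^2$-regularity of $u$ and $v$ (guaranteed by the $C^{1,1}$ boundary and Lipschitz coefficients), and invoking a Gagliardo--Nirenberg interpolation of $\|\hat\nabla w\|_{L^2}$ between $\|w\|_{L^2}$ and $\|w\|_{H^2}$, one would derive the integral stability estimate of section \ref{lemma}, of the form
\[
\int_\Omega (\mu_1-\mu_2)^2\,|\hat\nabla u|^2\,dx \leq C\bigl(\|u-v\|_{L^2(\Omega)}^{1/2} + \|\mu_1-\mu_2\|_{L^\infty(\partial\Omega)}\bigr).
\]
The boundary contribution in $\|\mu_1-\mu_2\|_{L^\infty(\partial\Omega)}$ arises from integrating by parts the right-hand side.

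Second, one needs a quantitative lower bound on $|\hat\nabla u|$ in the interior. Since a rigid trace for $u$ would force $g$ to be rigid, $\Theta(g)\geq\delta_0$ yields a positive lower bound $\int_{B_\rho(x_*)}|\hat\nabla u|^2\geq c>0$ on some initial interior ball, with $\rho, c$ controlled by the a priori data. A three-sphere (or doubling) inequality for the Lam\'e system, derived from a Carleman estimate (section \ref{QEUC}), then propagates this bound so that for every $x\in\Omega_d$ one has $\int_{B_{\rho(d)}(x)}|\hat\nabla u|^2\,dy\geq c(d)>0$ with $\rho(d),c(d)$ depending only on the a priori data and on $F[g]\leq L_0/\delta_0$.

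Finally, fix $x_0\in\Omega_d$ and $r\in(0,\rho(d))$. Using that $\mu_1-\mu_2$ is Lipschitz with norm under control,
\[
(\mu_1-\mu_2)^2(x_0)\int_{B_r(x_0)} |\hat\nabla u|^2\,dy \leq 2\int_{B_r(x_0)}(\mu_1-\mu_2)^2|\hat\nabla u|^2\,dy + C r^2 \int_{B_r(x_0)}|\hat\nabla u|^2\,dy,
\]
and combining the integral estimate with the lower bound one obtains $(\mu_1-\mu_2)^2(x_0)\leq C c(r)^{-1}\bigl(\|u-v\|_{L^2}^{1/2}+\|\mu_1-\mu_2\|_{L^\infty(\partial\Omega)}\bigr) + Cr^2$. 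Optimizing in $r$ (using the explicit power-law decay of $c(r)$ produced by iterating the three-sphere inequality) yields a H\"older-type bound of the form \eqref{main-result} for some $\delta\in(0,1)$, and taking the supremum over $x_0$ completes the proof. The main obstacle is the second step: unique continuation for the Lam\'e system is substantially more delicate than for scalar elliptic equations because of the coupling between components, and the lower bound is on the symmetrized gradient rather than on $u$ itself, which requires a Carleman estimate specifically adapted to $\dive(\C\hat\nabla\,\cdot)$, with quantitative tracking of the dependence on $F[g]$.
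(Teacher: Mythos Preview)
Your three-step outline --- weighted integral estimate, quantitative lower bound on $\int_{B_r}|\hat{\nabla} u|^2$ via unique continuation, and pointwise recovery by optimizing over $r$ --- matches the paper's architecture exactly, and your treatment of the second and third steps is essentially what is done in sections~\ref{QEUC} and~\ref{proof}.

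The genuine gap is in step one. Testing the equation for $w=u-v$ against ``a combination of $w$ and an extension of $g-k$'' yields only an energy estimate for $w$, i.e.\ control of $\int_\Omega \C_2\hat{\nabla} w\cdot\hat{\nabla} w$; it does not produce the weighted integral $\int_\Omega (\mu_1-\mu_2)^2|\hat{\nabla} u|^2$ you claim. To make $\int_\Omega\varphi\,\hat{\nabla} u\cdot\hat{\nabla}\zeta$ behave like $\int_\Omega|\varphi|\,|\hat{\nabla} u|^2$ the test function must be built from $u$, not from $w$. The naive candidate $\zeta=\varphi u$ fails: it is not in $H^1_0(\Omega)$, and the cross term $\int_\Omega\varphi\,\hat{\nabla} u\cdot\tfrac{1}{2}[\nabla\varphi\otimes u+u\otimes\nabla\varphi]$ is of size $\|\varphi\|_{L^\infty}$ and cannot be absorbed.

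The paper's device (Lemma~\ref{giuseppe-fabio}) is the truncated test function of \cite{A1986}: with $\varphi=\mu_1-\mu_2$, $\eta=\|\varphi\|_{L^\infty(\partial\Omega)}$ and a parameter $h>0$, one takes
\[
\zeta=\frac{\min\{(\varphi-\eta)^+,h\}}{h}\,u.
\]
This vanishes on $\partial\Omega$ by the choice of $\eta$ (which is how the boundary quantity enters --- not through an integration by parts as you suggest), equals $u$ on $\{\varphi>\eta+h\}$ so that $\varphi\,\hat{\nabla} u\cdot\hat{\nabla}\zeta=\varphi|\hat{\nabla} u|^2$ there, and on the thin transition layer the dangerous $\nabla\varphi$-contribution is rewritten as a divergence and estimated via Green's formula (the computation \eqref{triplo}--\eqref{3-6}). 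Optimizing $h=\|u-v\|_{L^2}^{1/4}$ together with the interpolation \eqref{intL2} then gives \eqref{DR1} with weight $|\varphi|$ (not $\varphi^2$) and exponent $1/4$ (not your $1/2$). Neither of these last two discrepancies would break steps two and three, but without this specific test-function construction step one does not go through as you describe it.
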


%%%%%%%%%%%%%%%%%%%%%%%%%%%%%%
\section{An integral estimate}\label{lemma}
%%%%%%%%%%%%%%%%%%%%%%%%%%%%%%%%%
\begin{lemma} \label{giuseppe-fabio}
Let $u$ and $v$ be as in \eqref{def-u} and \eqref{def-v}. Moreover, let assumptions \ref{ipotesi-dominio} and \ref{ipotesi-coeff} hold true.
Then, there exists a positive constant $C$ depending only on $M_0$, $r_0$, $M$, $\alpha_0$, $\beta_0$, $\|g\|_{H^{3/2}(\partial\Omega)}$ and $\|k\|_{H^{3/2}(\partial\Omega)}$ such that
 \begin{equation} \label{DR1}
 \int_{\Omega} |\mu_1-\mu_2||\hat{\nabla} u|^2\,dx 
 \leq 
 %\varepsilon^2
C\left( 
\|\mu_1-\mu_2\|_{L^\infty(\partial\Omega)} 
+
\|u-v\|^{1/4}_{L^2(\Omega)}
\right)\,.
 \end{equation}
\end{lemma}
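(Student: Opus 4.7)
The plan is to derive an Alessandrini-type identity by subtracting the weak formulations of \eqref{def-u} and \eqref{def-v} tested against $u$. Using $(\C_1-\C_2)\hat{\nabla} u=(\mu_1-\mu_2)\hat{\nabla} u$, the symmetry of $\C_2$, and writing $w=u-v$, one obtains
\[
\int_\Omega(\mu_1-\mu_2)|\hat{\nabla} u|^2\,dx
=-\int_\Omega \C_2\hat{\nabla} u:\hat{\nabla} w\,dx
+\int_{\partial\Omega}\bigl[(\mu_1-\mu_2)\hat{\nabla} u+\C_2\hat{\nabla} w\bigr]\nu\cdot g\,dS.
\]

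The boundary piece involving $(\mu_1-\mu_2)\hat{\nabla} u\,\nu\cdot g$ is controlled directly by $C\|\mu_1-\mu_2\|_{L^\infty(\partial\Omega)}$, since $\hat{\nabla} u\,\nu\in L^2(\partial\Omega)$ by the $H^2$-regularity of $u$ available under Assumptions \ref{ipotesi-dominio}--\ref{ipotesi-coeff} and the $H^{3/2}$-data. The second boundary piece, involving the conormal trace of $w$, is where the exponent $1/4$ originates: combining the $L^2$-trace estimate with the Sobolev interpolation
\[
\|\C_2\hat{\nabla} w\,\nu\|_{L^2(\partial\Omega)}\le C\|w\|_{H^{3/2}(\Omega)}\le C\|w\|_{L^2(\Omega)}^{1/4}\|w\|_{H^2(\Omega)}^{3/4}
\]
and the a priori $H^2$-bound on $w$ (from elliptic regularity and the hypotheses on $g,k$) yields a contribution of order $\|w\|_{L^2(\Omega)}^{1/4}$. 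The remaining interior term is controlled by Cauchy--Schwarz and the Gagliardo--Nirenberg estimate $\|\hat{\nabla} w\|_{L^2(\Omega)}\le C\|w\|_{L^2(\Omega)}^{1/2}\|w\|_{H^2(\Omega)}^{1/2}$; since $\|w\|_{L^2(\Omega)}$ is a priori bounded, this $\|w\|^{1/2}$-loss is subsumed into the slower $\|w\|^{1/4}$-rate.

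To upgrade the signed quantity $\int(\mu_1-\mu_2)|\hat{\nabla} u|^2$ to the absolute value $\int|\mu_1-\mu_2||\hat{\nabla} u|^2$, I would rerun the analysis with the roles of $(\mu_1,u)$ and $(\mu_2,v)$ interchanged (yielding the analogous bound on $\int(\mu_2-\mu_1)|\hat{\nabla} v|^2$), decompose $\Omega$ along the sign of $\mu_1-\mu_2$, and exchange $|\hat{\nabla} v|^2$ for $|\hat{\nabla} u|^2$ on the sign-negative piece at a further cost of $C\|\hat{\nabla} w\|_{L^2}$ via the algebraic identity $|\hat{\nabla} u|^2-|\hat{\nabla} v|^2=\hat{\nabla} w:\hat{\nabla}(u+v)$. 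Summing the two inequalities then produces $|\mu_1-\mu_2|$ on the left. I expect this sign-manipulation to be the main obstacle, since the Alessandrini identity is inherently nonlocal while $|\mu_1-\mu_2|$ is pointwise; preserving the $1/4$-exponent---which is sharp for the trace/interpolation inequality above---through the decomposition requires careful accounting of all $\hat{\nabla} w$-losses.
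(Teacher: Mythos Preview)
Your derivation of the signed bound
\[
\int_\Omega(\mu_1-\mu_2)|\hat{\nabla} u|^2\,dx\le C\bigl(\|\mu_1-\mu_2\|_{L^\infty(\partial\Omega)}+\|u-v\|_{L^2(\Omega)}^{1/4}\bigr)
\]
via boundary terms and the trace/interpolation inequality $\|w\|_{H^{3/2}}\le C\|w\|_{L^2}^{1/4}\|w\|_{H^2}^{3/4}$ is correct and is a genuinely different (and clean) route to the $1/4$ exponent than the one in the paper. However, the step where you pass from the signed quantity to $\int_\Omega|\mu_1-\mu_2||\hat{\nabla} u|^2$ does not work as described. Swapping roles gives $-\int_\Omega\varphi|\hat{\nabla} v|^2\le R$, and after trading $|\hat{\nabla} v|^2$ for $|\hat{\nabla} u|^2$ at the cost $C\|\hat{\nabla} w\|_{L^2}$ you obtain $-\int_\Omega\varphi|\hat{\nabla} u|^2\le R'$. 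Adding this to the first inequality yields only $\bigl|\int_\Omega\varphi|\hat{\nabla} u|^2\bigr|\le R'$, which is strictly weaker than the desired $\int_\Omega|\varphi||\hat{\nabla} u|^2\le R'$ whenever $\varphi$ changes sign. The identity you wrote is global on $\Omega$; you cannot restrict it to $\{\varphi>0\}$ or $\{\varphi<0\}$ without generating uncontrolled interior boundary terms on $\partial\{\varphi>0\}\cap\Omega$.

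This is precisely the obstruction the paper's argument is built to avoid. Following Alessandrini (1986), the paper tests not with $u$ but with the truncated function
\[
\zeta=\frac{\min\{(\varphi-\eta)^+,h\}}{h}\,u,\qquad \eta=\|\varphi\|_{L^\infty(\partial\Omega)},
\]
which lies in $H^1_0(\Omega)$ (so there are \emph{no} boundary terms; the $\eta$-contribution enters instead through the size of the truncation region) and is supported in $\{\varphi>\eta\}$, so the left-hand side of the weak identity directly produces $\int_{\{\varphi>\eta+h\}}\varphi|\hat{\nabla} u|^2$ with the correct sign. Running the same argument with $-\varphi$ and adding then legitimately yields $\int_{\{|\varphi|>\eta+h\}}|\varphi||\hat{\nabla} u|^2$. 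The exponent $1/4$ in the paper arises from a different balance: the cutoff generates $h^{-1}\nabla\varphi$-terms on the right, which are bounded by $h^{-1}\|\hat{\nabla} w\|_{L^2}\le Ch^{-1}\|w\|_{L^2}^{1/2}$, and the choice $h=\|w\|_{L^2}^{1/4}$ optimizes. If you want to salvage your boundary-integral approach, you would need to test with something like $\psi_\epsilon(\varphi)\,u$ for a smooth approximation $\psi_\epsilon$ of $\mathrm{sign}$ and keep $\epsilon$ as a free parameter; but once you do this carefully you are essentially reproducing the paper's truncation argument.
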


In the sequel we will use the following notation. The dot between vectors is the scalar product while the dot between matrices is the product in the sense of  Frobenius.

%%%%%%%%%%%%%%%%%%%%%%%%%%%%%%%%%%%%%%%%%%%%%%%
\textit{Proof of Lemma \ref{giuseppe-fabio}.}
%%%%%%%%%%%%%%%%%%%%%%%%%%%%%%%%%%%%%%%%%%%%%%

Let $u, v\in W^{1,2}(\Omega)$ be a solutions of \eqref{def-u} and \eqref{def-v}, respectively.

\noindent Let us set 
\[\varphi := \mu_1 -\mu_2,\quad \eta := \max_{\partial \Omega} |\mu_1 -\mu_2|.\]
By comparing the weak formulations of problems \eqref{def-u} and \eqref{def-v} we easily get
\begin{equation}\label{lambdauno}
\int_{\Omega} \varphi\hat{\nabla} u\cdot\hat{\nabla} \zeta \,dx = -\frac{1}{2} 
\int_{\Omega}\left(\lambda \dive (u-v) \dive \zeta + 2 \mu_2 \hat{\nabla} (u-v) \cdot \hat{\nabla} \zeta \right)\,dx\,,
\end{equation}
for every $\zeta \in W_{0}^{1,2}(\Omega)$.

To show the inequality we follow the route traced by \cite{A1986} by choosing a suitable test function. For $h>0$ set
\begin{equation}\label{zita}
\zeta(x)= \frac {\min\{(\varphi -\eta)^{+},h)\}}{h}\,u(x) 
\end{equation}
as our test function. 
We can easily check that
\[
\zeta(x)
=
\begin{cases}
\hfill 0 \hfill & \text{ se $\varphi \leq \eta$} 
\\
\dfrac{\varphi-\eta}{h}\,u  &\text{se $\eta<\varphi \leq \eta +h$} 
\\
\hfill u  \hfill  & \text{se $\varphi > \eta +h$}.
\end{cases}
\]
Let us first consider the LHS  of \eqref{lambdauno} with test function given by \eqref{zita}.
We have
\begin{equation}\label{LHS}
\begin{aligned}
\int_{\Omega} \varphi\hat{\nabla} u\cdot\hat{\nabla} \zeta \,dx=&\int_{\eta<\varphi<\eta +h}\!\!\!\!\! \varphi \hat{\nabla} u \cdot \hat{\nabla} \left(\frac{\varphi -\eta}{h} \, u\right) \,dx +  
\int_{\varphi>\eta +h}\!\!\!\!\! \varphi |\hat{\nabla} u |^2dx\\
=&\frac{1}{2h}\int_{\eta<\varphi<\eta +h} \!\!\!\!\!\varphi \hat{\nabla} u \cdot \left[\nabla\varphi \otimes u+u\otimes\nabla\varphi\right]dx
\\&+
\frac{1}{h}\int_{\eta<\varphi<\eta +h} \!\!\!\!\!\varphi(\varphi-\eta) |\hat{\nabla} u|^2 \,dx+  
\int_{\varphi>\eta +h} \!\!\!\!\!\varphi |\hat{\nabla} u |^2dx
\end{aligned}
\end{equation}
Let us now focus on  the integral
\begin{equation}\label{I}
I=\frac{1}{2h}\int_{\eta<\varphi<\eta +h} \!\!\!\!\!\varphi \hat{\nabla} u \cdot  \left[\nabla\varphi \otimes u+u\otimes\nabla\varphi\right]dx.
\end{equation}
By using the fact that, for any symmetric matrix $A$ and for any vector $b$ anc $c$ 
$A\cdot [b\otimes c+c\otimes b]= 2b^T Ac$, 
\begin{equation}\label{triplo} 
\begin{aligned}
\frac{1}{2}\varphi \hat{\nabla} u \cdot \left[\nabla\varphi \otimes u+u\otimes\nabla\varphi\right]&= (\nabla \varphi)^T \varphi u\hat{\nabla} u =
\frac{1}{2} (\nabla \varphi^2)^T \hat{\nabla} u\,u\\
&=\frac{1}{2}\dive[\varphi^2  \hat{\nabla} u\, u ] -\frac{1}{2}  \varphi^2 \dive (\hat{\nabla}u\, u)\\
&= \frac{1}{2}\dive[\varphi^2  u \hat{\nabla} u ] -\frac{1}{2}  \varphi^2 \dive (u\hat{\nabla}u).
\end{aligned}
\end{equation}
Let us denote by $\nu_c$ the unit outer normal to the set $\{\varphi>c\}$ and us apply twice Green formula to obtain
\begin{align*}
I
=&
\frac{1}{2h} \left\{\eta^2 \int_{\varphi=\eta} (u\hat{\nabla} u)\cdot\nu_\eta \,ds_x -
 (\eta+h)^2 \int_{\varphi=\eta+h} (u\hat{\nabla} u )\cdot\nu_{\eta+h} \,ds_x \right\}
\\&\hphantom{spazio}-\frac{1}{2h}\int_{\eta<\varphi<\eta +h} \varphi^2 \dive (u\hat{\nabla} u) \,dx
\\
=&
\frac{1}{2h}\left\{ \eta^2 \int_{\varphi>\eta} \dive (u\hat{\nabla} u) \,dx -
 (\eta +h)^2 \int_{\varphi>\eta+h} \dive (u\hat{\nabla} u) \,dx \right\}
\\
&
\hphantom{spazio}-
\frac{1}{2h}\int_{\eta<\varphi<\eta +h}  \varphi^2 \dive (u\hat{\nabla} u)  \,dx 
\\
=&
-\frac{1}{2h}\int_{\eta<\varphi<\eta +h} (\varphi^2 -\eta^2) \dive (u\hat{\nabla} u) \,dx 
-
(\eta +h/2) \int_{\varphi>\eta+h} \dive (u\hat{\nabla} u)\,dx .
\end{align*}

It is easy to check that in the set $\{\eta<\varphi<\eta+h\}$ we have $0\leq \frac{\varphi^2-\eta^2}{2h}\leq \eta+\frac{h}{2}$,
and, hence,
\begin{equation} \label{3-6}
|I|
\leq (\eta +h/2) \int_{\varphi>\eta} |\dive (u\hat{\nabla} u)| \,dx 
\end{equation}

By putting together \eqref{LHS}, \eqref{I} and \eqref{3-6}, we have that, for $\zeta$ given by \eqref{zita}

\begin{equation}\label{stimaLHS}
\begin{aligned}
\int_{\Omega} \varphi\hat{\nabla} u\cdot\hat{\nabla} \zeta \,dx&\geq -|I| +\frac{1}{h}\int_{\eta<\varphi<\eta +h} \!\!\!\!\!\varphi(\varphi-\eta) |\hat{\nabla} u|^2 \,dx+  
\int_{\varphi>\eta +h} \!\!\!\!\!\varphi |\hat{\nabla} u |^2dx\\
&\geq - (\eta +h/2) \int_{\varphi>\eta} |\dive (u\hat{\nabla} u)| \,dx +\int_{\varphi>\eta +h} \!\!\!\!\!\varphi |\hat{\nabla} u |^2dx.
\end{aligned}
\end{equation}
Let us now estimate the RHS of \eqref{lambdauno} for $\zeta$ given by \eqref{zita}. 

We have
\begin{align}% \label{dopo3-6}
\int_{\Omega}\lambda \dive (u-v) \dive \zeta \,dx
=&
\int_{\eta<\varphi<\eta +h} \lambda \dive (u-v) \left[ \frac{\varphi -\eta}{h} \,\dive (u) +\frac{\nabla \varphi \cdot u}{h} \right]\,dx
+
\notag \\
&
+
 \int_{\varphi>\eta +h} \lambda \dive (u-v)\dive (u)\,dx
 \notag
\end{align}
then, by assumption \ref{ipotesi-coeff},
\begin{equation}\label{lambdaotto}
\left|\int_{\Omega}\lambda \dive (u-v) \dive \zeta dx\right| 
\leq M\left(
\int_{\varphi >\eta} \!\!\!\!|\dive (u-v) \dive u|\,dx+
\frac{1}{h}  \int_{\varphi>\eta+h }\!\!\!\! \!\!\!\!|\dive (u-v)| |\nabla \varphi| |u|\,dx
\right) 
\end{equation}

We proceed in the same way to estimate
\begin{align*}
\int_{\Omega} 2 \mu_2 \hat{\nabla} (u-v) \cdot \hat{\nabla} \zeta \,dx
=&
\frac{1}{h} \int_{\eta<\varphi< \eta +h} \!\!\!\! \mu_2 \hat{\nabla} (u-v) \cdot [\nabla \varphi \otimes u +u\otimes \nabla \varphi
+
2(\varphi-\eta) \hat{\nabla} u] \,dx
\\
&
+ \int_{\varphi> \eta +h}\!\!\!\!2 \mu_2 \hat{\nabla} (u-v) \cdot \hat{\nabla} u\,dx
\\
=&
\frac{1}{h} \int_{\eta<\varphi< \eta +h} 2 \mu_2  (\nabla \varphi)^T \hat{\nabla} (u-v)\, u\,dx
+
\\
&
+
\frac{1}{h}\int_{\eta<\varphi< \eta +h} 2 \mu_2 (\varphi-\eta)\hat{\nabla} (u-v) \cdot
 \hat{\nabla} u\,dx
+
\\
&
+
\int_{\varphi> \eta +h} 2 \mu_2 \hat{\nabla} (u-v) \cdot \hat{\nabla} u\,dx.
\end{align*}

By assumption \ref{ipotesi-coeff}, we get
\begin{align} \label{lambdanove}
\left|\int_{\Omega} 2 \mu_2 \hat{\nabla} (u-v) \cdot \hat{\nabla} \zeta \,dx\right|
\leq &
\frac{2M}{h} \int_{\eta<\varphi< \eta +h}   |(\nabla \varphi)| |\hat{\nabla} (u-v)\, u|\,dx
\\
&
+
2M\int_{\varphi> \eta } |\hat{\nabla} (u-v) \cdot \hat{\nabla} u|\,dx
\notag% \\
\end{align}

Finally, by putting together \eqref{stimaLHS}, \eqref{lambdauno}, \eqref{lambdaotto} and \eqref{lambdanove} we get

\begin{align*}
\int_{\varphi>\eta+h} \!\!\!\!\varphi|\hat{\nabla} u|^2\,dx
&
\leq
\left(\eta +\frac{h}{2}\right) \int_{\varphi>\eta} |\dive(u \hat{\nabla} u)| \,dx+
\\
&
+ \frac{M}{2}\left(
\int_{\varphi >\eta} \!\!\!\!|\dive (u-v) \dive u|\,dx+
\frac{1}{h}  \int_{\varphi>\eta+h }\!\!\!\! \!\!\!\!|\dive (u-v)| |\nabla \varphi| |u|\,dx
\right)\\
&+M\left(\int_{\varphi> \eta }\!\!\!\! |\hat{\nabla} (u-v) \cdot \hat{\nabla} u|\,dx+\frac{1}{h} \int_{\eta<\varphi< \eta +h} \!\!\!\!  |(\nabla \varphi)| |\hat{\nabla} (u-v)\, u|\,dx\right).
\end{align*}

If we use $-\varphi$ instead of $\varphi$ we found a similar estimate. Then, merging the two, we get
\begin{equation}\label{lambdadieci}
\begin{aligned}
&\int_{|\varphi|>\eta+h} \!\!\!\!|\varphi||\hat{\nabla} u|^2\,dx
\leq
\left(\eta +\frac{h}{2}\right) \int_{|\varphi|>\eta} |\dive(u \hat{\nabla} u)| \,dx+
\\
&
\hphantom{ppp}+ \frac{M}{2}\left(
\int_{|\varphi| >\eta} \!\!\!\!|\dive (u-v) \dive u|\,dx+
\frac{1}{h}  \int_{|\varphi|>\eta+h }\!\!\!\! \!\!\!\!|\dive (u-v)| |\nabla \varphi| |u|\,dx
\right)\\
&\hphantom{ppp}+M\left(\int_{|\varphi|> \eta }\!\!\!\! |\hat{\nabla} (u-v) \cdot \hat{\nabla} u|\,dx+\frac{1}{h} \int_{\eta<|\varphi|< \eta +h} \!\!\!\!  |(\nabla \varphi)| |\hat{\nabla} (u-v)\, u|\,dx\right)
\end{aligned}
\end{equation}
Since
$$
\int_{|\varphi|<\eta+h} |\varphi||\hat{\nabla} u|^2\,dx 
\leq
(\eta +h)\int_{|\varphi|<\eta+h} |\hat{\nabla} u|^2\,dx
\leq
(\eta +h)\int_{\Omega} |\hat{\nabla} u|^2\,dx
$$
we obtain, from \eqref{lambdadieci}
\begin{equation}\label{st}
\begin{aligned}
		\int_{\Omega} |\varphi||\hat{\nabla} u|^2\,dx
	\leq&
	(\eta +h)\int_{\Omega} |\hat{\nabla} u|^2 \,dx+\left(\eta +\frac{h}{2}\right) \int_{\Omega} |\dive(u \hat{\nabla} u)| \,dx+
	\\
	&+ \frac{M}{2}\left(
\int_{\Omega} |\dive (u-v) \dive u|\,dx+
\frac{1}{h}  \int_{\Omega }|\dive (u-v)| |\nabla \varphi| |u|\,dx
\right)\\
&+M\left(\int_{\Omega} |\hat{\nabla} (u-v) \cdot \hat{\nabla} u|\,dx+\frac{1}{h} \int_{\Omega} |\nabla \varphi| |\hat{\nabla} (u-v)\, u|\,dx\right)
\end{aligned}\end{equation}
By % Lax-Milgram and by 
\cite[Theorem 7.1, chap. 3]{V}, we have
\[%\|u\|_{H^1(\Omega)}\leq C \|g\|_{H^{1/2}(\partial \Omega)}\mbox{ and }
\|u\|_{H^2(\Omega)}\leq C \|g\|_{H^{3/2}(\partial \Omega)}\]
and, since $|\nabla\varphi|\leq 2M$ by \eqref{ipotesi-coeff},
by H\"older inequality, we can easily get from \eqref{st} that
\begin{equation}\label{st2}\begin{aligned}
		\int_{\Omega} |\varphi||\hat{\nabla} u|^2\,dx
	\leq&C\|g\|_{3/2}\left\{(3\eta +2h)\|g\|_{3/2}\vphantom{\frac{2M}{h}}	\right.\\
	&\left.+ 2M\left(1+\frac{2M}{h}\right)\left(\|\dive (u-v)\|_{L^2(\Omega)}+\|\hat{\nabla} (u-v)\|_{L^2(\Omega)}\right)\right\}
\end{aligned}\end{equation}
where %$\|g\|_{1/2}=\|g\|_{H^{1/2}(\partial\Omega)}$ and 
$\|g\|_{3/2}=\|g\|_{H^{3/2}(\partial\Omega)}$.

Let us notice that, by using an interpolation inequality (see, for example \cite[Theorem 7.28]{GT}), if we denote by $D$ any partial derivative of first order, we have
\begin{equation}\label{intL2}
\begin{aligned}
\|D (u-v)\|_{L^2(\Omega)}
&\leq 
 \varepsilon \|u-v\|_{H^2(\Omega)} + \dfrac{c}{\varepsilon} \|u-v\|_{L^2(\Omega)}
\\
&
\leq 
C 
\left[  \varepsilon (\|g\|_{3/2}+\|k\|_{3/2}) + \dfrac{1}{\varepsilon} \|u-v\|_{L^2(\Omega)}\right] 
\end{aligned}\end{equation}
Now choose $\varepsilon = \|u-v\|_{L^2(\Omega)}^{1/2}$ and we get
\begin{equation}\label{stimaderivdifferenza}
\|D (u-v)\|_{L^2(\Omega)} 
\leq 
C 
\left(\|g\|_{3/2}+\|k\|_{3/2}+1\right)\|u-v\|_{L^2(\Omega)}^{1/2},
\end{equation}
with $C$ depending only on $M_0$, $r_0$, $M$, $\alpha_0$, $\beta_0$. 

By \eqref{st2} and \eqref{stimaderivdifferenza} we have
\begin{equation}\label{st3}\begin{aligned}
		\int_{\Omega} |\varphi||\hat{\nabla} u|^2\,dx
	\leq&C\|g\|_{3/2}\left\{(3\eta +2h)\|g\|_{3/2}\vphantom{\left(1+\frac{M}{h}\right)}\right.
	\\
	&\left.+ 3M\left(1+\frac{M}{h}\right)\left(\|g\|_{3/2}+\|k\|_{3/2}+1\right)\|u-v\|_{L^2(\Omega)}^{1/2} 
	\right\}
\end{aligned}\end{equation}
We finally choose $h=\|u-v\|^{1/4}_{L^2(\Omega)}$
and get \eqref{DR1}.\qed

\section{Quantitative estimates of unique continuation}\label{QEUC}
Most of the results that we state and prove in this section are already known for solutions of the  Lam\'e system, but they are usually stated in terms of Neumann boundary conditions. We want here to use Dirichlet boundary conditions  that are better related to the internal measurements we are going to use. 

%Let us start with a three sphere inequality for $|\hat{\nabla}u|$.
\begin{theorem}[Three sphere inequality for $|\hat{\nabla}u|$]\label{3sferegrad}
Under assumption \ref{ipotesi-coeff}, there exists $\theta\in(0,1]$ depending only on $\alpha_0$, $\beta_0$ and $M$ such that for every $u\in H^1(B_R)$ solution to the equation
\[\dive(\C \hat{\nabla}u)=0\]
and for every $r_1$, $r_2$, $r_3$ such that $0<r_1<r_2<r_3<\theta R$ we have
\begin{equation}\label{3sf}
\int_{B_{r_2}}|\hat{\nabla}u|^2dx\leq C\left(\int_{B_{r_1}}|\hat{\nabla}u|^2dx\right)^{\delta}\left(\int_{B_{r_3}}|\hat{\nabla}u|^2dx\right)^{1-\delta},
\end{equation}
where $C>0$ and $\delta\in(0,1)$ depend only on $\alpha_0$, $\beta_0$, $M$, $r_1/r_3$ and $r_2/r_3$. 
\end{theorem}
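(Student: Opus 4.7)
I would reduce this statement to the already-known three-sphere inequality for the $L^2$-norm of solutions of the Lam\'e system, which is a classical consequence of Carleman estimates under the hypotheses of Lipschitz coefficients and strong convexity (see for instance the quantitative unique continuation results of Alessandrini--Morassi and Lin--Nakamura--Wang for elasticity). Since any rigid motion $R(x)=a+Wx$ with $W+W^T=0$ lies in the kernel of $\dive(\C\hat{\nabla}\cdot)$ and satisfies $\hat{\nabla}R\equiv 0$, the base three-sphere inequality applies equally well to $u-R$, while the quantity $|\hat{\nabla}u|=|\hat{\nabla}(u-R)|$ that appears in \eqref{3sf} is invariant under such subtraction. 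So the task is to upgrade a three-sphere inequality for $\|u-R\|_{L^2}$ into one for $\|\hat{\nabla}u\|_{L^2}$.

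The bridge between the two is provided by two standard tools. First, Korn's second inequality on balls: for each $\rho$, if $R_\rho$ denotes the $L^2(B_\rho)$-orthogonal projection of $u$ onto the (finite-dimensional) space of rigid motions, then
\[
\|u-R_\rho\|_{L^2(B_\rho)}\leq C\rho\,\|\hat{\nabla}u\|_{L^2(B_\rho)},
\]
with a constant $C$ independent of $\rho$ by scaling. Second, a Caccioppoli inequality for the Lam\'e system: for any rigid motion $R$ and any $\rho<\sigma$,
\[
\|\hat{\nabla}u\|_{L^2(B_\rho)}^2\leq \frac{C}{(\sigma-\rho)^2}\,\|u-R\|_{L^2(B_\sigma)}^2,
\]
which is obtained by testing the weak form of the equation against $\varphi^2(u-R)$ for a suitable cutoff $\varphi$ and invoking strong convexity. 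Both ingredients use only assumption \ref{ipotesi-coeff}.

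Combining the three pieces: I would fix $R$ to be the rigid-motion projection of $u$ on $B_{r_3}$. Caccioppoli bounds $\|\hat{\nabla}u\|_{L^2(B_{r_2})}$ by $\|u-R\|_{L^2(B_{r_2'})}$ for some intermediate $r_2'\in(r_2,r_3)$. Applying the known three-sphere inequality to the solution $u-R$ on a triple $r_1'<r_2'<r_3$ (with $r_1'$ slightly smaller than $r_1$) interpolates this by $\|u-R\|_{L^2(B_{r_1'})}^{\delta'}\,\|u-R\|_{L^2(B_{r_3})}^{1-\delta'}$. For the outer ball, Korn on $B_{r_3}$ converts $\|u-R\|_{L^2(B_{r_3})}$ directly into $r_3\|\hat{\nabla}u\|_{L^2(B_{r_3})}$. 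For the inner ball, I would apply Korn with the different projection $R_{r_1'}$ on $B_{r_1'}$, writing $u-R=(u-R_{r_1'})+(R_{r_1'}-R)$, and absorb the rigid-motion discrepancy $R_{r_1'}-R$ using the equivalence of any two reasonable norms on the finite-dimensional space of rigid motions, controlled in turn by $\|u-R\|_{L^2(B_{r_3})}$, which has already been estimated. The main obstacle I expect is precisely this bookkeeping of distinct rigid-motion projections on nested balls while ensuring that the final constants depend only on $\alpha_0,\beta_0,M,r_1/r_3,r_2/r_3$; since the Korn constants and the norm-equivalence constants on rigid motions scale in a controlled way with the radius, this should go through without essential difficulty.
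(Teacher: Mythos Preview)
Your plan is correct and is exactly the route the paper indicates: it cites Corollary~3.3 of \cite{AMR2004}, whose proof upgrades the three-sphere inequality for $\|u\|_{L^2}$ to one for $\|\hat{\nabla}u\|_{L^2}$ via Caccioppoli and Korn--Poincar\'e, and it invokes \cite{LNW10} for the base three-sphere inequality under merely Lipschitz Lam\'e coefficients. One small technical correction to your bookkeeping: take $R$ to be the rigid-motion projection on the \emph{inner} ball $B_{r_1'}$ rather than on $B_{r_3}$; with your choice the discrepancy term $R_{r_1'}-R$ is controlled only by the outer quantity $\|\hat{\nabla}u\|_{L^2(B_{r_3})}$, which would contaminate the small factor in \eqref{3sf}, whereas projecting on $B_{r_1'}$ handles the inner ball directly by Korn and lets the finite-dimensional norm equivalence push the discrepancy harmlessly into the outer factor.
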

\begin{proof}
The proof of this estimates goes along the same lines of the proof of Corollary 3.3 in \cite{AMR2004}. The regularity of the Lam\'e coefficients can be lowered by starting from the three spheres inequality for the solution proved in \cite[Theorem 1.1]{LNW10}.
\end{proof}
\begin{theorem}[Lipschitz Propagation of Smallness]\label{LPSmallness}
Under assumptions \ref{ipotesi-dominio}, \ref{ipotesi-coeff} and \ref{ipotesi-g}, let $u\in H^1(\Omega)$ be solution to \eqref{def-u}.
Then, for every $\rho>0$ and for every $x\in\Omega_{5\rho}$, we have
\begin{equation}\label{LPS}
\int_{B_\rho(x)}|\hat{\nabla}u|^2dx\geq C_\rho\int_\Omega |\hat{\nabla}u|^2dx,
\end{equation}
where $C_\rho$ depends on $\alpha_0$, $\beta_0$, $M$, $r_0$, $M_0$, $|\Omega|$, $F[g]$, and $\rho$.
\end{theorem}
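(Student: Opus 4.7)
The plan is to iterate the three-sphere inequality of Theorem \ref{3sferegrad} along a chain of balls in $\Omega$ and to complement it with a two-sided global bound on $\|\hat{\nabla}u\|_{L^2(\Omega)}$ expressed in terms of $\|g\|_{H^1(\partial\Omega)}$ and $\Theta(g)$. The second ingredient is forced by the fact that $\hat{\nabla}$ has nontrivial kernel (the infinitesimal rigid motions); the non-degeneracy assumption \eqref{norigid} is precisely what quantifies the departure of $u$ from that kernel.

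As a preliminary step I would establish
$$c\,\Theta(g)^2 \;\leq\; \int_\Omega|\hat{\nabla}u|^2\,dx \;\leq\; C\,\|g\|^2_{H^1(\partial\Omega)}.$$
The upper bound is the standard energy estimate for the Dirichlet problem \eqref{def-u}. For the lower bound, Korn's second inequality produces a rigid motion $P$ with $\|u-P\|_{H^1(\Omega)}\leq C\|\hat{\nabla}u\|_{L^2(\Omega)}$; taking traces and using $u|_{\partial\Omega}=g$ gives $\Theta(g)\leq \|g-P|_{\partial\Omega}\|_{H^{1/2}(\partial\Omega)}\leq C\|\hat{\nabla}u\|_{L^2(\Omega)}$. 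Consequently $\int_\Omega|\hat{\nabla}u|^2\,dx$ is pinched between two positive constants expressible through $L_0$, $\delta_0$ and ultimately $F[g]$.

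For $x\in\Omega_{5\rho}$ and any $y\in\Omega_{4\rho}$, I connect $x$ to $y$ by a chain of balls of radius $\rho$ whose concentric enlargements of radius $4\rho$ stay inside $\Omega$; the number $N$ of links is bounded by a function of $|\Omega|/\rho^n$, $M_0$ and $r_0$. Iterating \eqref{3sf} along this chain yields
$$\int_{B_\rho(y)}|\hat{\nabla}u|^2\,dx \;\leq\; K\left(\int_{B_\rho(x)}|\hat{\nabla}u|^2\,dx\right)^{\delta^N}\!\left(\int_\Omega|\hat{\nabla}u|^2\,dx\right)^{1-\delta^N}\!.$$
Choosing a bounded-overlap cover of $\Omega_{4\rho}$ by $N_\rho\lesssim |\Omega|/\rho^n$ balls $B_\rho(y_i)$ and summing,
$$\int_{\Omega_{4\rho}}|\hat{\nabla}u|^2\,dx \;\leq\; K' N_\rho\left(\frac{\int_{B_\rho(x)}|\hat{\nabla}u|^2\,dx}{\int_\Omega|\hat{\nabla}u|^2\,dx}\right)^{\delta^N}\int_\Omega|\hat{\nabla}u|^2\,dx.$$
For the boundary strip I combine the $H^2$-regularity of $u$ (already invoked in the proof of Lemma \ref{giuseppe-fabio}) with a Sobolev-type interpolation to get
$$\int_{\Omega\setminus\Omega_{4\rho}}|\hat{\nabla}u|^2\,dx \;\leq\; C\rho^\alpha\|g\|^2_{H^{3/2}(\partial\Omega)}$$
for some $\alpha>0$. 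Subtracting this from the global energy and using $\int_\Omega|\hat{\nabla}u|^2\geq c\delta_0^2$ from the preliminary step, the ratio $\int_{B_\rho(x)}|\hat{\nabla}u|^2/\int_\Omega|\hat{\nabla}u|^2$ is pinned below by an explicit $C_\rho$ depending only on the listed parameters, yielding \eqref{LPS}.

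The main obstacle is the boundary strip. The crude $O(\rho^\alpha)$ bound forces $\rho$ to be small relative to a threshold depending on $\delta_0/L_0$, while the theorem is stated for every $\rho$ with $\Omega_{5\rho}\neq\emptyset$. The remaining (bounded) range of $\rho$ must be treated either by a separate direct chain argument, or by replacing the interpolation with a sharper boundary doubling or Carleman estimate for the Lam\'e system with Dirichlet data. The dependence of $C_\rho$ on $F[g]$ is then inherited from this step together with the Korn-based lower bound on the global $\hat{\nabla}u$-energy.
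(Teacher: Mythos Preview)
Your approach is essentially the same as the paper's: iterate the three-sphere inequality \eqref{3sf} along a chain of balls, control the boundary layer $\Omega\setminus\Omega_{c\rho}$ by $C\rho^{\alpha}\|g\|^2$ using $H^2$-regularity, and bound $\|\hat{\nabla}u\|_{L^2(\Omega)}$ from below via Korn's inequality and the trace theorem, producing the frequency $F[g]$. The paper organizes the same three ingredients in the same way (cf.\ \cite[Proposition 4.1]{AMR2004}).

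The only point you over-complicate is your ``main obstacle''. Once \eqref{LPS} is proved for all $\rho\leq\bar{\rho}$ (with $\bar{\rho}$ determined by $F[g]$), the case $\rho>\bar{\rho}$ is trivial by monotonicity: if $x\in\Omega_{5\rho}\subset\Omega_{5\bar{\rho}}$ then $B_{\bar{\rho}}(x)\subset B_\rho(x)$, so
\[
\int_{B_\rho(x)}|\hat{\nabla}u|^2\,dx \;\geq\; \int_{B_{\bar{\rho}}(x)}|\hat{\nabla}u|^2\,dx \;\geq\; C_{\bar{\rho}}\int_\Omega|\hat{\nabla}u|^2\,dx,
\]
and one simply takes $C_\rho=C_{\bar{\rho}}$ for $\rho\geq\bar{\rho}$. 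No boundary doubling or Carleman estimate is needed; this is exactly what the paper does in one line.
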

\begin{proof}
The proof follows essentially the same lines of the proof of Proposition 4.1 in \cite{AMR2004}. 
First of all, as in Lemma 4.2 in \cite{AMR2004}, by H\"older inequality and Sobolev inequality we can estimate
\begin{equation}\label{lemma4.2}
\int_{\Omega\setminus\Omega_{5\rho/8}} |\hat{\nabla} u|^2dx \leq C\rho^{1/n}\|g\|^2_{H^1(\partial\Omega)}.\end{equation}
The only difference consists in substituting inequality (4.6) in \cite{AMR2004} with the trace estimate
\[\|u\|_{H^{3/2}(\Omega)}\leq C \|g\|_{H^1(\partial\Omega)}.\]
As in (4.12) in \cite{AMR2004}, by using a suitable chain of balls and the three spheres inequality (Theorem \ref{3sferegrad}) we get 
\begin{equation}\label{31}
\frac{\|\hat{\nabla}u\|_{L^2(\Omega_{5\rho/8})}}{\|\hat{\nabla}u\|_{L^2(\Omega)}}\leq \frac{C}{\rho^{n/2}}\left(\frac{\|\hat{\nabla}u\|_{L^2(B_\rho(x))}}{\|\hat{\nabla}u\|_{L^2(\Omega)}}\right)^{\delta^L}
\end{equation}
where $C$ and $\delta$ depends only on $\alpha_0$, $\beta_0$, $M$ and $|\Omega|$, whereas $L\leq \frac{|\Omega|}{\omega_n\rho^n}$.

By \eqref{lemma4.2}, we have
\[\frac{\|\hat{\nabla}u\|_{L^2(\Omega_{5\rho/8})}}{\|\hat{\nabla}u\|_{L^2(\Omega)}}=1-\frac{\|\hat{\nabla}u\|_{L^2(\Omega\setminus\Omega_{5\rho/8})}}{\|\hat{\nabla}u\|_{L^2(\Omega)}}\geq 1-\frac{C\rho^{1/n}\|g\|_{H^1(\partial\Omega)}}{\|\hat{\nabla}u\|_{L^2(\Omega)}}.\]

Now, we need to estimate $\|\hat{\nabla}u\|_{L^2(\Omega)}$ from below. Let us set
\begin{equation}\label{medie}\overline{a}=\frac{1}{|\Omega|}\int_\Omega u\,dx,\quad \overline{W}=\frac{1}{|\Omega|}\int_\Omega\hat{\nabla}u \,dx.\end{equation} 
By trace inequality and Korn inequality we have
\begin{equation}\label{servedopo}
\|g-(\overline{a}+\overline{W}x)\|_{H^{1/2}(\partial\Omega)}
\leq C\|u-(\overline{a}+\overline{W}x)\|_{H^{1}(\Omega)}\leq C\|\hat{\nabla} u\|_{L^2(\Omega)},
\end{equation}
 and, hence,
\begin{equation}\label{e}
\frac{\|\hat{\nabla}u\|_{L^2(\Omega_{5\rho/8})}}{\|\hat{\nabla}u\|_{L^2(\Omega)}}\geq 1-\frac{C\rho^{1/n}\|g\|_{H^1(\partial\Omega)}}{\|g-(\overline{a}+\overline{W}x)\|_{H^{1/2}(\partial\Omega)}}\geq 1-C\rho^{1/n}F[g].\end{equation}
Let us take $\overline{\rho}$ such that
\[1-C\overline{\rho}^{1/n}F[g]\geq\frac{1}{2}\]
so that, by \eqref{31} and \eqref{e} the thesis \eqref{LPS} follows for $\rho\leq \overline{\rho}$. For larger values of $\rho$ iniquality \eqref{LPS} is trivial.
%\textbf{Dobbiamo essere precisi con questo $\rho$?}
\end{proof}
Now, we need a doubling inequality for $\hat{\nabla} u$. We start with recalling a doubling inequality for $u$ that corresponds to \cite[Theorem 1.2]{KLW2016}.

%\textbf{Controllare ipotesi di ellitticita e i raggi nella frequenza }
\begin{theorem}
Under assumption \ref{ipotesi-coeff}, there exists a positive constant $C$ such that for every $v\in H^1(B_{2R})$ solution to $\dive(\C\hat{\nabla}v)=0$  we have
\begin{equation}\label{doubu}
\int_{B_{2r}(x)}|v|^2dx\leq C\int_{B_{r}(x)}|v|^2dx
\end{equation}
for every $B_{2r}(x)\subset B_{R/2}$ and with $C$ depending on $\alpha_0$, $\beta_0$, $M$ and increasingly  on
\[F_{loc}=\frac{\|v\|_{L^2(B_{2R}\setminus B_{R})}}{\|v\|_{L^2(B_R\setminus B_{R/2})}}\] 
\end{theorem}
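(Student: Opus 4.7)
The plan is to obtain the doubling inequality from a three-sphere inequality for $v$, combined with the global ratio $F_{loc}$ that acts as the ``starting data'' for an iteration.

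\textit{Step 1: Three-sphere inequality for $v$.} Using the Carleman estimates for the Lamé system established in \cite{LNW10} (the same ingredient that underlies Theorem \ref{3sferegrad}), one first obtains, for concentric balls $B_{r_1}\subset B_{r_2}\subset B_{r_3}\subset B_{R/2}$,
$$
\int_{B_{r_2}(x)}|v|^2\,dx \leq \widetilde{C}\left(\int_{B_{r_1}(x)}|v|^2\,dx\right)^\theta \left(\int_{B_{r_3}(x)}|v|^2\,dx\right)^{1-\theta},
$$
with $\theta\in(0,1)$ and $\widetilde{C}>0$ depending only on $\alpha_0,\beta_0,M$ and the ratios $r_1/r_3,r_2/r_3$.

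\textit{Step 2: Dyadic iteration.} Set $a_k=\int_{B_{2^k r}(x)}|v|^2\,dx$ and $N_k=\log(a_{k+1}/a_k)$. Applied to $(B_{2^{k-1}r},B_{2^k r},B_{2^{k+1}r})$, Step 1 yields the recursion
$$
N_{k-1} \leq A + \alpha N_k,\qquad A=\log(\widetilde{C})/\theta,\ \alpha=(1-\theta)/\theta.
$$
Iterating this up to the largest $K$ with $B_{2^{K+1}r}(x)\subset B_R$ and summing the resulting geometric expression dominates $N_0=\log(a_1/a_0)$ by a combination of $A$ and the terminal quantity $N_K$.

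\textit{Step 3: Terminal control via $F_{loc}$.} At scale $2^K r$, which is comparable to $R$, the ratio $a_{K+1}/a_K$ is estimated via the decomposition $\int_{B_{2^{K+1}r}}|v|^2 = \int_{B_{2^K r}}|v|^2 + \int_{B_{2^{K+1}r}\setminus B_{2^K r}}|v|^2$, observing that the annulus is contained in $B_{2R}\setminus B_R$ while $B_{2^K r}$ contains an annulus comparable with $B_R\setminus B_{R/2}$. Plugging the resulting bound, which is monotone in $F_{loc}$, back into Step 2 yields the desired inequality with constant depending on $\alpha_0,\beta_0,M$ and increasing in $F_{loc}$.

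\textit{Main obstacle.} The delicate point is making the iteration in Step 2 produce a finite, $r$-independent constant when the three-sphere exponent $\theta$ is small, so that $\alpha=(1-\theta)/\theta>1$ and the geometric sum diverges with the number of dyadic levels $K\sim\log(R/r)$. Overcoming this requires arranging that the terminal ratio $N_K$ is \emph{already} of order $\log(1+F_{loc})$, so that the iteration is truncated after a number of steps depending only on the geometry and $F_{loc}$, not on $r$. This is precisely the step in which \cite{KLW2016} develops a dedicated Carleman-based monotonicity argument in place of the naive iteration, and adapting that machinery so that the final constant is monotone in $F_{loc}$ is the principal technical work.
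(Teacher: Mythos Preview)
The paper does not prove this theorem at all: it is stated as a direct citation of \cite[Theorem~1.2]{KLW2016}, with no argument given. So the relevant comparison is between your proposed route and the actual proof in \cite{KLW2016}.

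Your Steps~1--3 do not constitute a proof, and you essentially concede this in your ``Main obstacle'' paragraph. The recursion $N_{k-1}\le A+\alpha N_k$ with $\alpha=(1-\theta)/\theta$ only yields an $r$-independent bound on $N_0$ when $\alpha<1$, i.e.\ $\theta>1/2$; the three-sphere inequality from \cite{LNW10} gives no such exponent. For $\alpha\ge 1$ the iteration produces $N_0\le A\sum_{j=0}^{K-1}\alpha^j+\alpha^K N_K$, and since $K\sim\log(R/r)$ this blows up as $r\to 0$. Your suggested fix --- that the terminal ratio $N_K$ is already controlled by $F_{loc}$ so the iteration ``truncates'' --- does not help: even if $N_K$ is bounded, the factor $\alpha^K$ in front of it and the partial sum $A\sum\alpha^j$ still diverge. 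There is also a geometric slip in Step~3: the dyadic balls are centered at $x$, whereas $F_{loc}$ compares annuli centered at the center of $B_{2R}$, so the containments you assert (e.g.\ that $B_{2^{K+1}r}(x)\setminus B_{2^K r}(x)\subset B_{2R}\setminus B_R$) are false as written; in fact by your own choice of $K$ you have $B_{2^{K+1}r}(x)\subset B_R$.

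The point is that a doubling inequality is strictly stronger than what a three-sphere inequality plus iteration can give. The proof in \cite{KLW2016} does not iterate three spheres; it establishes a Carleman estimate with a singular weight at the center, from which one extracts a monotonicity-in-$r$ statement (a frequency-function type bound) that directly yields doubling with constant depending only on the data at the outer scale --- and it is that outer-scale dependence that is packaged as the monotone dependence on $F_{loc}$. Your final paragraph effectively acknowledges that one must invoke this machinery, which is exactly what the paper does by citation.
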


\begin{theorem}
Under assumptions \ref{ipotesi-dominio}, \ref{ipotesi-coeff} and \ref{ipotesi-g}, let $u$ be a solution to \eqref{def-u}. Then, for every $x_0\in \Omega_d$ and $0<r\leq d$,
\begin{equation} \label{trace:korn:poincare}
\int_{B_r(x_0)} |\hat{\nabla} u|^2\,dx \geq  C_d \left( \dfrac{r}{d}\right)^{K}\|g\|^2_{H^{1/2}(\partial\Omega)},
\end{equation} 
where $C_d$ and $K$ depend on $\alpha_0$, $\beta_0$, $M$, $r_0$, $M_0$, $|\Omega|$ and $K$ depends also on $F[g]$.
\end{theorem}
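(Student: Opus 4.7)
\textit{Plan.} I chain together the three quantitative-unique-continuation tools from this section. First, Theorem \ref{LPSmallness} (Lipschitz propagation of smallness, henceforth LPS) with $\rho=d/5$ (admissible since $x_0\in\Omega_d=\Omega_{5\rho}$) gives
\[\int_{B_{d/5}(x_0)}|\hat\nabla u|^2\,dx\geq C_{d/5}\int_\Omega |\hat\nabla u|^2\,dx.\]
Next, the Korn--Poincar\'e--trace chain \eqref{servedopo}, together with the elementary inequalities $\|g\|_{H^{1/2}(\partial\Omega)}\leq\|g\|_{H^1(\partial\Omega)}=F[g]\,\Theta(g)$, gives $\int_\Omega|\hat\nabla u|^2\,dx\geq c\,\|g\|^2_{H^{1/2}(\partial\Omega)}/F[g]^2$. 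Combining these two steps already proves the theorem when $r\geq d/5$ (with any $K$).

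\textit{Small $r$.} For $r<d/5$ the three-sphere inequality \eqref{3sf} alone is insufficient: iterating it along a geometric chain of radii yields only a super-polynomial lower bound of the form $(r/d)^{K\log(d/r)}$, not the polynomial $(r/d)^K$ demanded by the statement. The doubling inequality \eqref{doubu} is the right tool. I apply it to $w:=u-\bar\ell$, where $\bar\ell(x)=\bar a+\bar Wx$, $\bar W+\bar W^T=0$, is the rigid motion minimising $\|u-\bar\ell\|_{L^2(\Omega)}$; since $\dive(\C\hat\nabla\bar\ell)=0$, $w$ solves the same Lam\'e system and $\hat\nabla w=\hat\nabla u$. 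Iterating \eqref{doubu} $\sim\log_2(d/r)$ times on concentric balls centred at $x_0$ inside $B_d(x_0)\subset\Omega$ yields
\[\int_{B_r(x_0)}|w|^2\,dx\geq c\,(r/d)^{K_1}\int_{B_{d/5}(x_0)}|w|^2\,dx.\]
A Caccioppoli estimate applied to $w$ combined with Poincar\'e--Korn then transfers this lower bound on $|w|^2$ into a lower bound on $|\hat\nabla u|^2$ on $B_r(x_0)$, giving the stated estimate with an updated exponent $K$.

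\textit{Main obstacle.} The subtle point is the final transfer. The doubling inequality \eqref{doubu} is naturally stated for $w=u-\bar\ell$ with a \emph{fixed} rigid motion $\bar\ell$, while the Poincar\'e--Korn inequality on the small ball $B_r(x_0)$ involves the \emph{ball-adapted} minimiser $\ell_r$. Bridging them requires verifying (i) that the doubling constant $C$ in \eqref{doubu}, which depends on the local frequency $F_\text{loc}$, can be controlled uniformly across all scales $r$ in terms of $F[g]$---here the initial LPS bound provides the lower bound needed to keep $F_\text{loc}$ finite---and (ii) that $\bar\ell$ is close enough to $\ell_r$ for the doubling output on $|w|^2$ to feed directly into Poincar\'e--Korn without a loss that destroys the polynomial exponent. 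These are the standard adaptations carried out in \cite{AMR2004}, and they produce the exponent $K$ that depends in particular on $F[g]$, as announced.
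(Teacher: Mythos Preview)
Your high-level plan---LPS at scale $\sim d$, a doubling mechanism to pass to small $r$, and the trace/Korn chain \eqref{servedopo} linking $\int_\Omega|\hat\nabla u|^2$ to $\|g\|_{H^{1/2}}^2$---matches the paper's. The execution diverges at the doubling step, and your version contains a real gap.

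The paper does \emph{not} iterate \eqref{doubu} for a fixed $w=u-\bar\ell$ and transfer at the end. Instead, at each scale $r$ it introduces the \emph{ball-adapted} $v_r=u-c_r-W_r(x-x_0)$, with $c_r,W_r$ the averages of $u$ and of the skew part of $\nabla u$ over $B_r(x_0)$, and runs the chain
\[
\int_{B_{3r/2}}|\hat\nabla u|^2\ \overset{\text{Caccioppoli}}{\leq}\ \frac{C}{r^2}\int_{B_{2r}}|v_r|^2\ \overset{\eqref{doubu}}{\leq}\ \frac{C}{r^2}\int_{B_{r}}|v_r|^2\ \overset{\text{Korn}}{\leq}\ C\int_{B_r}|\hat\nabla u|^2,
\]
obtaining a doubling inequality \emph{directly for $|\hat\nabla u|^2$}, which is then iterated to produce the polynomial rate. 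Because the rigid motion is recalibrated at every scale, the Korn step is exact and no mismatch arises; the only price is that the constant from \eqref{doubu} now depends on $r$ through the local frequency $F_{r,\mathrm{loc}}$ of $v_r$, and most of the paper's proof is the uniform bound $F_{r,\mathrm{loc}}\leq C\,F[g]$ via LPS and \eqref{servedopo}.

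Your point (ii)---that the fixed $\bar\ell$ is ``close enough'' to the local $\ell_r$---is not a standard adaptation in \cite{AMR2004}; that reference (and this paper) uses precisely the per-scale recalibration above. And (ii) is not true in the strength you need: Korn--Poincar\'e on $B_r$ only gives $\int_{B_r}|u-\ell_r|^2\leq Cr^2\int_{B_r}|\hat\nabla u|^2$ for the \emph{minimising} rigid motion $\ell_r$, and since $\int_{B_r}|u-\ell_r|^2=\min_\ell\int_{B_r}|u-\ell|^2\leq\int_{B_r}|w|^2$, a lower bound on $\int_{B_r}|w|^2$ gives no lower bound on $\int_{B_r}|u-\ell_r|^2$, hence none on $\int_{B_r}|\hat\nabla u|^2$. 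Generically $\ell_r$ tends (as $r\to0$) to the rigid part of the first-order Taylor expansion of $u$ at $x_0$, which has no reason to coincide with the global $\bar\ell$. The fix is exactly the paper's: combine Caccioppoli, \eqref{doubu}, and Korn \emph{at each scale} with the scale-adapted $v_r$, obtain \eqref{doubgrad}, and only then iterate.
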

\begin{proof}
Let 
\[v=u-c_r-W_r(x-x_0)\]
where
\[c_r=\frac{1}{|B_r(x_0)|}\int_{B_r(x_0)}u\,dx\mbox{ and }
W_r=\frac{1}{|B_r(x_0)|}\int_{B_r(x_0)}\hat{\nabla}u\,dx.\]
Since function $v$ is still a solution of  equation
$\dive(\C_1 \hat{\nabla}v)=0$ in $\Omega$, by Caccioppoli inequality
(see \cite[Lemma 3.4]{AMR2004}) we have
\begin{equation*}
	\int_{B_{3r/2}(x_0)}|\nabla v|^2dx\leq \frac{C}{r^2}\int_{B_{2r}(x_0)}|v|^2dx,
\end{equation*}
where $C$ depends only on $\alpha_0$, $\beta_0$ and $M$,
hence,
 trivially,
\begin{equation}\label{cac}
	\int_{B_{3r/2}(x_0)}|\hat{\nabla} u|^2dx=\int_{B_{3r/2}(x_0)}|\hat{\nabla} v|^2dx\leq \frac{C}{r^2}\int_{B_{2r}(x_0)}| v|^2dx.
\end{equation}
By Korn inequality (see \cite[Lemma 3.5]{AMR2004}
\begin{equation}\label{korn}
	\int_{B_r(x_0)}|v|^2dx=\int_{B_r(x_0)}|u-c_r-W_r(x-x_0)|^2dx
	\leq Cr^2\int_{B_r(x_0)}|\hat{\nabla}u|^2dx.
\end{equation}
By \eqref{doubu}, \eqref{cac} and \eqref{korn} we have
\begin{equation}\label{1-3}
	\int_{B_{3r/2}(x_0)}|\hat{\nabla} u|^2dx\leq C \int_{B_{r}(x_0)}|\hat{\nabla} u|^2dx
\end{equation}
where $C$ depends on $\alpha_0$, $\beta_0$, $M$, $r_0$, $M_0$, $|\Omega|$ and increasingly on
\[F_{r,loc}=\frac{\|u-c_r-W_r(x-x_0)\|_{L^2(B_{2R}\setminus B_{R})}}{\|u-c_r-W_r(x-x_0)\|_{L^2(B_R\setminus B_{R/2})}}.\]
Now, we need to bound $F_{r,loc}$ from above independently of $r$.
First of all we notice that,
\[|c_r|\leq \|u\|_{L^\infty(B_r(x_0))}\leq \|u\|_{L^\infty(B_{2R})},\mbox{ and }
|W_r|\leq \|\nabla u\|_{L^\infty(B_r(x_0))}\leq\|\nabla u\|_{L^\infty(B_{2R})},\]
hence, 
by internal regularity estimates (see, for example \cite{C}) and \cite[Theorem 4.2, chap.3]{V}, 
\begin{equation}\label{pag4}
\begin{aligned}
\|u-c_r-W_r(x-x_0)\|_{L^2(B_{2R}\setminus B_{R})}&\leq C\left(%\|u\|_{L^2(B_{2R})}+
\|u\|_{L^\infty(B_{2R})}+\|\nabla u\|_{L^\infty(B_{2R})}\right)\\
&\leq C \|u\|_{H^1(\Omega)}\leq C\|g\|_{H^{1/2}(\partial\Omega)}.
\end{aligned}
\end{equation}
Let us now consider a ball $B_{r_1}(\overline{x})\subset B_{R}\setminus B_{R/2}$ with $r_1=\max\{d/5,R/4\}$ and notice that, by Caccioppoli inequality,
\begin{eqnarray}\label{pag5}
\int_{B_{R}\setminus B_{R/2}}|u-c_r-W_r(x-x_0)|^2dx&\geq&
	\int_{B_{r_1}(\overline{x})}|u-c_r-W_r(x-x_0)|^2dx\nonumber\\&\geq&
\frac{r_1^2}{C}\int_{B_{\frac{r_1}{2}}(\overline{x})}|\nabla(u-c_r-W_r(x-x_0))|^2dx\nonumber\\
&\geq& 
\frac{r_1^2}{C}\int_{B_{\frac{r_1}{2}}(\overline{x})}|\hat{\nabla}u|^2dx.
\end{eqnarray}
By \eqref{LPS},
\begin{equation*}	\int_{B_{\frac{r_1}{2}}(\overline{x})}|\hat{\nabla}u|^2dx\geq
C_{r_1}\int_\Omega |\hat{\nabla}u|^2dx.
\end{equation*}
Arguing as in the proof of Theorem \ref{LPSmallness}, by \eqref{servedopo},
we have
\begin{equation}\label{pag5-6}		\int_{B_{\frac{r_1}{2}}(\overline{x})}|\hat{\nabla}u|^2dx\geq
C_{r_1}\|g-(\overline{a}+\overline{W}x)\|^2_{H^{1/2}(\partial\Omega)}.
\end{equation}
and, hence,  by \eqref{pag4}, \eqref{pag5} and \eqref{pag5-6}
\begin{equation}\label{stimafreqloc}
	F_{r,loc}\leq C F[g]
\end{equation}
where $C$ does not depend on $r$.

Then, by \eqref{1-3}  and \eqref{stimafreqloc},
\begin{equation}\label{doubgrad}
	\int_{B_{3r/2}(x_0)}|\hat{\nabla} u|^2dx\leq C \int_{B_{r}(x_0)}|\hat{\nabla} u|^2dx
\end{equation}
where $C$ depends on $\alpha_0$, $\beta_0$, $M$, $r_0$, $M_0$, $|\Omega|$ and increasingly on $F[g]$.

Once the doubling inequality \eqref{doubgrad} for $\hat{\nabla}u$ is obtained, the polynomial rate of 
$\int_{B_{r}(x_0)}|\hat{\nabla} u|^2dx$ can be easily obtained by iteration (see \cite[Remark 4.11]{AMR2002} for a similar procedure).
\end{proof}
\section{Proof of Theorem \ref{mainth}}\label{proof}
Here we follow an argument already used in \cite{ADcFV2016} (see e.g. proof of Theorem 3.1).
Let us set again $\varphi=\mu_1-\mu_2$. By \eqref{DR1} we obtain 
\[
\int_{\Omega_d} |\varphi||\widehat{\nabla u}|^2\,dx
\leq
\varepsilon^2,
\]
where
\[\varepsilon^2=
C\left( 
\|\mu_1-\mu_2\|_{L^\infty(\partial\Omega)} 
+
\|u-v\|^{1/4}_{L^2(\Omega)}
\right)\]
Now, let $x_0\in \Omega_d$ be such that $|\varphi(x_0)|=\max_{\bar{\Omega}_d}|\varphi|$. Using the Lipschitz assumption on $\mu_1$, $\mu_2$ we obtain
$$
|\varphi(x_0)| \leq |\varphi(x)| + 2Mr
\qquad \forall x \in B_r(x_0),\, r\in (0,d].
$$ 

Multiplying both sides by $|\hat{\nabla} u|^2$ and integrating over $B_r(x_0)$ we get
\begin{align*}
|\varphi(x_0)|\int_{B_r(x_0)} |\hat{\nabla} u|^2\,dx
&
\leq
\int_{B_r(x_0)} |\varphi(x)||\hat{\nabla} u|^2\,dx 
+
2Mr\int_{B_r(x_0)} |\hat{\nabla} u|^2\,dx 
\\
&
\leq
\int_{\Omega} |\hat{\nabla} u|^2\,dx
+
2Mr \int_{B_r(x_0)} |\hat{\nabla} u|^2\,dx 
\\
&
\leq 
\varepsilon^2 + 2Mr \int_{B_r(x_0)} |\hat{\nabla} u|^2\,dx\,.
\end{align*}
Then,
$$
|\varphi(x_0)|
\leq 
\dfrac{\varepsilon^2}{\int_{B_r(x_0)} |\hat{\nabla} u|^2\,dx} 
+
2Mr.
$$
Now we use \eqref{trace:korn:poincare} and set 
$N_1=\left(C_d\|g\|^2_{H^{1/2}(\partial\Omega)}\right)^{-1}$,
$N_2=\log_2 K$, as to get
$$
|\varphi(x_0)|\leq N_1 \left( \dfrac{d}{r}\right)^{N_2} \varepsilon^2 + 2Mr
\qquad \forall r \in (0,d]\,.
$$
Finally, by setting $\lambda=\dfrac{r}{d}$ we obtain
$$
|\varphi(x_0)|\leq N_1 \lambda^{-N_2} \varepsilon^2 + 2Md\lambda
\qquad \forall \lambda \in (0,1]\,.
$$
Let 
$$
\bar{\lambda} = \left(\dfrac{ N_1\varepsilon^2}{2Md} \right)^{1/(N_2+1)}. 
$$
If $\bar{\lambda} \leq 1$ we choose $\lambda=\bar{\lambda}$ and get
\begin{equation} \label{lambda<1}
|\varphi(x_0)|
\leq
2  \left( N_1 \varepsilon^2 \right)^{1/(N_2+1)} \left( 2Md\right) ^{N_2/(N_2+1)}.
\end{equation}
If $\bar{\lambda}>1$ we immediately get
$$
|\varphi(x_0)|\leq 2M\leq 2M\left( \dfrac{N_1\varepsilon^2}{2Md}\right) ^{1/(N_2+1)}
$$
from which  together with \eqref{lambda<1} we obtain \eqref{main-result}.\qed

\end{document}